\newcommand{\norm}[1]{\left\lVert #1 \right\rVert} 
\DeclareMathOperator*{\argmin}{argmin}
\begin{document}
\frontmatter          
\pagestyle{headings}  
\addtocmark{Primal-dual numerical method} 
\mainmatter              
\title{Primal-Dual Method for Searching Equilibrium\\
in Hierarchical Congestion Population Games}
\titlerunning{Searching Equilibrium
in Hierarchical Population Games}  
%
\author{Pavel Dvurechensky\inst{1} \and Alexander Gasnikov\inst{2} \and Evgenia Gasnikova\inst{3}
\and Sergey Matsievsky\inst{4} \and Anton Rodomanov \inst{5} \and Inna Usik\inst{6}}
\authorrunning{Pavel Dvurechensky et al.} 
%
\tocauthor{Pavel Dvurechensky, Alexander Gasnikov, Evgenia Gasnikova, Sergey Matsievsky, Anton Rodomanov,
and Inna Usik}
\institute{
Weierstrass Institute for Applied Analysis and Stochastics, Berlin 10117, Germany, \\
Institute for Information Transmission Problems, Moscow 127051, Russia,
\email{pavel.dvurechensky@wias-berlin.de}
\and
Moscow Institute of Physics and Technology,
\\Dolgoprudnyi 141700, Moscow Oblast, Russia,\\
Institute for Information Transmission Problems, Moscow 127051, Russia, \\
\email{gasnikov@yandex.ru} 
\and Moscow Institute of Physics and
Technology, \\Dolgoprudnyi 141700, Moscow Oblast, Russia,\\
\email{egasnikova@yandex.ru} 
\and Immanuel Kant Baltic
Federal University, Kaliningrad 236041, Russia,\\
\email{matsievsky@newmail.ru} 
\and Higher School of Economics National Research University, Moscow 125319, Russia \\
	\email{anton.rodomanov@gmail.com}
\and Immanuel Kant Baltic
Federal University, Kaliningrad 236041, Russia,\\
\email{lavinija@mail.ru}
}

\maketitle              

\begin{abstract}
In this paper, we consider a large class of hierarchical congestion population games.
One can show that the equilibrium in a game of such type can be
described as a minimum point in a properly constructed multi-level convex optimization
problem. 
We propose a fast primal-dual composite gradient method and apply it to the problem, which is dual to the problem describing the equilibrium in the considered class of games.
We prove that this method allows to find an approximate solution of the initial
problem without increasing the complexity.

\keywords{convex optimization, algorithm complexity, dual problem, primal-dual method, logit dynamics, multistage model of traffic flows,
entropy, equilibrium}
\end{abstract}

\section{Problem Statement}
In this subsection, we briefly describe a variational principle for equilibrium description in hierarchical congestion population games. In particular, we consider a multistage model of traffic flows. Further details can be found in \cite{GasGasMatsUs2015}.

We consider the traffic network described by the directed graph
$\Gamma ^1=\left\langle {V^1,E^1} \right\rangle $. 
Some of its vertices $O^1\subseteq V^1$ are sources (origins), and some are
sinks (destinations) $D^1\subseteq V^1$. 
We denote a set of source-sink pairs by $OD^1\subseteq O^1\otimes D^1$. 
Let us assume that for each pair $w^1\in OD^1$ there is a flow of network users of the amount of $d_{w^1}^1 :=d_{w^1}^1 \cdot M$
, where $M\gg 1$, per unit time who moves from the origin of $w^1$ to its destination. We call the pair $w^1, d_{w^1}^1$ as correspondence.

Let edges $\Gamma ^1$ be partitioned into two types $E^1=\tilde
{E}^1\coprod \bar {E}^1$. 
The edges of type $\tilde {E}^1$ are
characterized by non-decreasing functions of expenses $\tau _{e^1}^1
(f_{e^1}^1) := \tau _{e^1}^1 (f_{e^1}^1/M)$. 
Expenses $\tau _{e^1}^1
(f_{e^1}^1)$ are incurred by those users who use in their path an edge
$e^1\in \tilde {E}^1$, the flow of users on
this edge being equal to $f_{e^1}^1 $. 
The pairs of vertices setting the
edges of type $\bar {E}^1$ are in turn a source-sink pairs
$OD^2$ (with correspondences $d_{w^2}^2 =f_{e^1}^1 $, $w^2=e^1\in
\bar {E}_1$) in a traffic network of the second level $\Gamma
^2=\left\langle {V^2,E^2} \right\rangle$ whose edges are partitioned
in turn into two types $E^2=\tilde {E}^2\coprod \bar {E}^2$.  
The
edges having type $\tilde {E}^2$ are characterized by non-decreasing
functions of expenses $\tau _{e^2}^2 (f_{e^2}^2) := \tau _{e^2}^2
({f_{e^2}^2/M})$. 
Expenses $\tau _{e^2}^2 (f_{e^2}^2)$ are incurred
by those users who use in their path an edge $e^2\in \tilde {E}^2$, the flow of users on this edge being equal to
$f_{e^2}^2 $.

The pairs of vertices setting the edges having type $\bar {E}^2$ are in
turn source-sink pairs $OD^3$ (with correspondences $d_{w^3}^3
=f_{e^2}^2 $, $w^3=e^2\in \bar {E}^2$) in a traffic network of a
higher level $\Gamma ^3=\left\langle {V^3,E^3} \right\rangle $, etc.
We assume that in total there are $m$ levels: $\tilde
{E}^m=E^m$. Usually, in applications, the number $m$ is small and varies from 2 to 10.

Let $P_{w^1}^1 $ be the set of all paths in $\Gamma ^1$ which correspond to a correspondence
$w^1$. Each user in the graph $\Gamma ^1$ chooses a path $p_{w^1}^1 \in
P_{w^1}^1 $ (a consecutive set of the edges passed by the user)
corresponding to his correspondence $w^1\in OD^1$. 
Having defined a path $p_{w^1}^1$, it is possible to restore
unambiguously the edges having type $\bar {E}^1$ which belong to this path.
On each of these edges $w^2\in \bar {E}^1$, user can choose a path
$p_{w^2}^2 \in P_{w^2}^2$ ($P_{w^2}^2$ is a set of all paths
corresponding in the graph $\Gamma ^2$ to the correspondence $w^2$), etc. Let us assume that each
user have made the choice.

We denote by $x_{p^1}^1 $ the size of the flow of users on
a path $p^1\in P^1=\coprod\limits_{w^1\in OD^1} {P_{w^1}^1 }$,
$x_{p^2}^2 $ the size of the flow of users on a path $p^2\in
P^2=\coprod\limits_{w^2\in OD^2} {P_{w^2}^2 }$, etc. Let us notice
that
\[
x_{p_{w^k}^k }^k \ge 0, \quad p_{w^k}^k \in P_{w^k}^k , \quad
\sum\limits_{p_{w^k}^k \in P_{w^k}^k } {x_{p_{w^k}^k }^k }
=d_{w^k}^k , \quad w^k\in OD^k, \quad k=1,...,m
\]
and that
\[
w^{k+1}\left( {=e^k} \right)\in OD^{k+1}\left( {=\bar {E}^k}
\right), \quad d_{w^{k+1}}^{k+1} =f_{e^k}^k , \quad k=1,...,m-1.
\]

For all $k=1,...,m$, we introduce for the graph $\Gamma ^k$ and the set of paths $P^k$ a matrix
\[
\Theta ^k=\left\| {\delta _{e^kp^k} } \right\|_{e^k\in E^k,p^k\in
P^k} , \quad \delta _{e^kp^k} =\left\{ {\begin{array}{l}
 1,\mbox{ }e^k\in p^k \\
 0,\mbox{ }e^k\notin p^k \\
 \end{array}} \right..
\]
Then, for all $k=1,...,m$, the vector $f^k$ of flows on the edges of the graph $\Gamma ^k$ is defined in a unique way
by the vector of flows on the paths $x^k=\bigl\{
{x_{p^k}^k } \bigr\}_{p^k\in P^k}$:
\[
f^k=\Theta ^kx^k.
\]
We introduce the following notation	
\[
x=\left\{ {x^k} \right\}_{k=1}^m , \quad f=\left\{ {f^k}
\right\}_{k=1}^m , \quad \Theta =\mbox{diag}\left\{ {\Theta ^k}
\right\}_{k=1}^m .
\]

We denote $E = \coprod\limits_{k = 1}^m {{{\tilde E}^k}} $ and set $t = {\left\{ {{t_e}} \right\}_{e \in E}}$. Further, we define by induction, with the basis $g_{{p^m}}^m\left( t \right) = \sum\limits_{{e^m} \in {E^m}} {{\delta _{{e^m}{p^m}}}{t_{{e^m}}}} $, for all $k<m$, 
$$
g_{{p^k}}^k\left( t \right) = \sum\limits_{{e^k} \in {{\tilde E}^k}} {{\delta _{{e^k}{p^k}}}{t_{{e^k}}}}  - \sum\limits_{{e^k} \in {{\bar E}^k}} {{\delta _{{e^k}{p^k}}}{\gamma ^{k + 1}}\psi _{{e^k}}^{k + 1}\left( {{t \mathord{\left/
 {\vphantom {t {{\gamma ^{k + 1}}}}} \right.
 \kern-\nulldelimiterspace} {{\gamma ^{k + 1}}}}} \right)} 
$$ 
to be the "`length"' of the path $p^k$ in the graph $\Gamma^k$ with the edges ${e^k} \in {\tilde E^k}$ having weight ${t_{{e^k}}}$ and the edges ${e^k} \in {\bar E^k}$ having weight ${\gamma ^{k + 1}}\psi _{{e^k}}^{k + 1}\left( {{t \mathord{\left/
 {\vphantom {t {{\gamma ^{k + 1}}}}} \right.
 \kern-\nulldelimiterspace} {{\gamma ^{k + 1}}}}} \right)$. 
Here ${\gamma ^{k + 1}} \ge 0$ is the parameter, characterizing the restricted rationality of the network users on the level $k$, and 
$$
\psi _{{e^k}}^{k + 1}\left( t \right) = \psi _{{w^{k + 1}}}^{k + 1}\left( t \right) = \ln \left( {\sum\limits_{{p^{k + 1}} \in P_{{w^{k + 1}}}^{k + 1}} {\exp \left( { - g_{{p^{k + 1}}}^{k + 1}\left( t \right)} \right)} } \right).
$$  
Let us now describe the probabilistic model for the choice of the path by a network user. 
We assume that each user $l$ of a traffic network who uses a
correspondence $w^k\in OD^k$ at a level $k$ (and simultaniously the edge $e^{k-1}(=
w^k)\in \bar {E}^{k-1}$ at the level $k-1$) chooses to use a path $p^k\in
P_{w^k}^k$ if
\[
p^k=\arg \mathop {\max }\limits_{q^k\in P_{w^k}^k}\{{-g_{q^k}^k
(t)+\xi _{q^k}^{k,l}}\},
\]
where $\xi _{q^k}^{k,l}$ are iid random variables with double exponential distribution (also known as 
Gumbel's distribution) with cumulative distribution function
\[
P(\xi _{q^k}^{k,l} <\zeta)=\exp \{ {-e^{-\zeta/\gamma ^k - E}} \},
\]
where $E\approx 0.5772$ is Euler--Mascheroni constant. In this case
\[
M[ {\xi _{q^k}^{k,l} } ]=0, \quad D[ {\xi _{q^k}^{k,l} } ] = (
\gamma^k)^2\pi^2/6.
\]
Also, it turns out  that, when the number of agents on each
correspondence $w^k\in OD^k$, $k=1,...,m$ tends to infinity, i.~e. $M\to\infty$, the limiting distribution of users among paths is the Gibbs's distribution (also known as logit distribution)
\begin{equation}
\label{Gas_eq1} x_{p^k}^k =d_{w^k}^k \frac{\exp (-g_{p^k}^k
(t)/\gamma ^k )}{\sum\limits_{\tilde {p}^k\in P_{w^k}^k } {\exp (
-g_{\tilde {p}^k}^k (t)/\gamma ^k)} },  p^k\in P_{w^k}^k ,  w^k\in
OD^k, k=1,...,m.
\end{equation}
It is worth noting here that (see Theorem 1
below)
\begin{align}
&{\gamma ^k}\psi _{{w^k}}^k\left( {{t \mathord{\left/
 {\vphantom {t {{\gamma ^k}}}} \right.
 \kern-\nulldelimiterspace} {{\gamma ^k}}}} \right) = {E_{{{\left\{ {\xi _{{p^k}}^{k,l}} \right\}}_{{p^k} \in P_{{w^k}}^k}}}}\left[ {\mathop {\max }\limits_{{p^k} \in P_{{w^k}}^k} \left\{ { - g_{{p^k}}^k\left( t \right) + \xi _{{p^k}}^{k,l}} \right\}} \right]. \notag \\
& f = \Theta x =  - \nabla {\psi ^1}\left( {{t \mathord{\left/
 {\vphantom {t {{\gamma ^1}}}} \right.
 \kern-\nulldelimiterspace} {{\gamma ^1}}}} \right), {\psi ^1}\left( t \right) = \sum\limits_{{w^1} \in O{D^1}} {d_{{w^1}}^1\psi _{{w^1}}^1\left( t \right)}. \qquad \qquad \qquad \qquad \qquad \quad (1') \notag 
\end{align}

For the sake of convenience we introduce the graph
\[
\Gamma =\coprod\limits_{k=1}^m {\Gamma ^k} = \biggl\langle
{V,E=\coprod\limits_{k=1}^m {\tilde {E}^k} } \biggr\rangle
\]
and denote $t_e =\tau _e (f_e)$, $e \in E$.

Assume that, for a given vector of expenses $t$ on edges $E$, which is identical to all users, each user chooses the shortest path at each level based on noisy information and averaging of the information from the higher levels.
Then, in the limit number of users tending to infinity, such behavior of users leads to the description of
distribution of users on paths/edges given in (\ref{Gas_eq1}) and the
equilibrium configuration in the system is characterized by the vector $t$
for which the vector $x$, obtained from (\ref{Gas_eq1}), leads to the vector $f=\Theta x$ (see also $(1')$) satisfying $t = \{ \tau _e (f_e )
\}_{e\in E}$.

Introducing ${\sigma _e}\left( {{f_e}} \right) = \int\limits_0^{{f_e}} {{\tau _e}\left( z \right)dz} $ and $\sigma _e^*\left( {{t_e}} \right) = \mathop {\max }\limits_{{f_e}} \left\{ {{f_e}{t_e} - {\sigma _e}\left( {{f_e}} \right)} \right\}$, we obtain 
$$
\frac{{d\sigma _e^*\left( {{t_e}} \right)}}{{d{t_e}}} = \frac{d}{{d{t_e}}}\mathop {\max }\limits_{{f_e}} \left\{ {{f_e}{t_e} - \int\limits_0^{{f_e}} {{\tau _e}\left( z \right)dz} } \right\} = {f_e}: {t_e} = {\tau _e}\left( {{f_e}} \right), e \in E.
$$

This allows to prove the following.

\begin{theorem}[Variational principle] The fixed point equilibrium $x,f,t$ can be found as a solution of the following problem (here and below we denote by $\mbox{dom}\;\sigma_e^\ast$ the effective domain of
the function conjugated to a function $\sigma_e$)
\begin{equation}
\label{Gas_eq2} \mathop {\min }\limits_{f,x} \{ {\Psi
(x,f):\;f=\Theta x,\;x\in X} \} = - \mathop {\min }\limits_{t \in {{\left\{ {{\rm{dom}}\;\,\sigma _e^*} \right\}}_{e \in E}}} \left\{ {{\gamma ^1}{\psi ^1}\left( {{t \mathord{\left/
 {\vphantom {t {{\gamma ^1}}}} \right.
 \kern-\nulldelimiterspace} {{\gamma ^1}}}} \right) + \sum\limits_{e \in E} {\sigma _e^*\left( {{t_e}} \right)} } \right\},
\end{equation}
\vspace{-2ex}
\textit{where}
\[
\Psi (x,f):=\Psi ^1(x)=\sum\limits_{e^1\in \tilde {E}^1} {\sigma
_{e^1}^1 (f_{e^1}^1)} +\Psi ^2(x)+\gamma ^1\sum\limits_{w^1\in OD^1}
{\sum\limits_{p^1\in P_{w^1}^1 } {x_{p^1}^1 \ln
(x_{p^1}^1/d_{w^1}^1)} },
\]
\[
\Psi ^2(x)=\sum\limits_{e^2\in \tilde {E}^2} {\sigma _{e^2}^2
(f_{e^2}^2)} +\Psi ^3(x)+\gamma ^2\sum\limits_{w^2\in \bar {E}^1}
{\sum\limits_{p^2\in P_{w^2}^2 } {x_{p^2}^2 \ln
(x_{p^2}^2/d_{w^2}^2)} }, d_{w^2}^2 =f_{w^2}^1 ,
\]
\vspace{-5ex}
\begin{center}
\textit{\ldots}
\end{center}
\vspace{-2ex}
\[
\Psi ^k(x)=\sum\limits_{e^k\in \tilde {E}^k} {\sigma _{e^k}^k
(f_{e^k}^k)} +\Psi ^{k+1}(x)+\gamma ^k\sum\limits_{w^k\in \bar
{E}^{k-1}} {\sum\limits_{p^k\in P_{w^k}^k } {x_{p^k}^k \ln
(x_{p^k}^k/d_{w^k}^k)} },
\]
\vspace{-2ex}
\[
d_{w^{k+1}}^{k+1} =f_{w^{k+1}}^k ,
\]
\vspace{-6ex}
\begin{center}
\textit{\ldots}
\end{center}
\vspace{-2ex}
\[
\Psi ^m(x)=\sum\limits_{e^m\in E^m} {\sigma _{e^m}^m \left(
{f_{e^m}^m } \right)} +\gamma ^m\sum\limits_{w^m\in \bar {E}^{m-1}}
{\sum\limits_{p^m\in P_{w^m}^m } {x_{p^m}^m \ln
(x_{p^m}^m/d_{w^m}^m)} },
\]
\[
 d_{w^m}^m =f_{w^m}^{m-1}.
\]
\end{theorem}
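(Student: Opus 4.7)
The plan is to derive the duality identity \eqref{Gas_eq2} by Lagrangian relaxation of the flow-to-path couplings $f^k=\Theta^k x^k$, combined with a backward recursion over the levels $k=m,m-1,\dots,1$. Strong duality is justified by convexity of the primal (the $\sigma_e$ are convex because the $\tau_e$ are non-decreasing, and $x\ln(x/d)$ is convex), together with affine coupling constraints and a Slater-type strictly positive feasible point. I would introduce Lagrange multipliers $t_{e^k}^k$ for every constraint $f_{e^k}^k=\sum_{p^k}\delta_{e^k p^k}x_{p^k}^k$ with $e^k\in\tilde E^k$, while keeping the couplings $d_{w^{k+1}}^{k+1}=f_{w^{k+1}}^k$ on edges $w^{k+1}\in\bar E^k$ as direct substitutions inside the next-level entropy.

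Then I would execute the recursion starting from $k=m$. Since $\tilde E^m=E^m$, separable minimization over $f_{e^m}^m$ of $\sigma_{e^m}^m(f_{e^m}^m)-t_{e^m}^m f_{e^m}^m$ returns $-\sigma_{e^m}^{m*}(t_{e^m}^m)$ by definition of the convex conjugate. Minimization of the resulting entropic subproblem over $\{x_{p^m}^m\ge 0,\sum_{p^m\in P_{w^m}^m}x_{p^m}^m=d_{w^m}^m\}$ with linear coefficient $g_{p^m}^m(t)=\sum_{e^m}\delta_{e^m p^m}t_{e^m}$ yields the Gibbs law \eqref{Gas_eq1} and optimal value $-\gamma^m d_{w^m}^m\psi_{w^m}^m(t/\gamma^m)$. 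Using $d_{w^m}^m=f_{w^m}^{m-1}$, this value is linear in the level-$(m-1)$ decisions; after substituting $f_e^{m-1}=\sum_{p^{m-1}}\delta_{e p^{m-1}}x_{p^{m-1}}^{m-1}$ for $e\in\bar E^{m-1}$ and dualizing the $\tilde E^{m-1}$-constraints with $t^{m-1}$, the coefficient of $x_{p^{m-1}}^{m-1}$ in the level-$(m-1)$ Lagrangian becomes exactly $g_{p^{m-1}}^{m-1}(t)$ by the inductive definition of $g^{m-1}$. A second entropic minimization returns $-\gamma^{m-1}d_{w^{m-1}}^{m-1}\psi_{w^{m-1}}^{m-1}(t/\gamma^{m-1})$, and iterating from $k=m$ down to $k=1$ assembles the full dual objective $\gamma^1\psi^1(t/\gamma^1)+\sum_{e\in E}\sigma_e^*(t_e)$; the restriction $t\in\prod_{e\in E}\mbox{dom}\,\sigma_e^*$ is inherited from the conjugates.

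To close the loop I would verify that the KKT optimality conditions of the resulting dual reproduce the fixed-point equilibrium: stationarity in $t_e$ gives $f_e=d\sigma_e^*(t_e)/dt_e$, which by the identity derived just before the theorem is exactly $t_e=\tau_e(f_e)$, while the Gibbs-type minimizers emerging from each inner entropic subproblem are precisely those prescribed by \eqref{Gas_eq1} at every level.

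The principal technical difficulty is the bookkeeping inside the recursion: one must verify carefully that the log-sum-exp produced when eliminating the level-$(k+1)$ variables re-enters the level-$k$ Lagrangian as exactly the $\bar E^k$-term of $g_{p^k}^k$, with the correct $\gamma^{k+1}$-scaling both of the outer prefactor and of the argument $t/\gamma^{k+1}$. Since $g^k$ is nonlinear in $t$ for $k<m$, these scalings interact nontrivially with the subsequent entropic minimization and must be threaded through the induction with care.
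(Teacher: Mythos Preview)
The paper does not actually give a proof of this theorem: after establishing the preliminary identities \eqref{Gas_eq1}, $(1')$, and the conjugate relation $d\sigma_e^*/dt_e=f_e\Leftrightarrow t_e=\tau_e(f_e)$, it simply writes ``This allows to prove the following'' and states the result, referring to \cite{GasGasMatsUs2015} for details. So there is no detailed argument in the paper to compare against.

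That said, your Lagrangian-relaxation plus backward-recursion scheme is the natural way to fill the gap, and it is consistent with the ingredients the paper highlights. The conjugate relation the paper singles out is exactly what you use when you minimize $\sigma_{e^k}(f_{e^k})-t_{e^k}f_{e^k}$ over $f_{e^k}$; the recursive definition of $g_{p^k}^k(t)$ in the paper is precisely designed so that the log-sum-exp value $-\gamma^{k+1}d_{w^{k+1}}^{k+1}\psi_{w^{k+1}}^{k+1}(t/\gamma^{k+1})$ produced at level $k{+}1$, after substituting $d_{w^{k+1}}^{k+1}=\sum_{p^k}\delta_{w^{k+1}p^k}x_{p^k}^k$, becomes the $\bar E^k$-part of the linear term $g_{p^k}^k(t)x_{p^k}^k$ at level $k$; and identity $(1')$ together with the conjugate derivative is exactly your KKT check at the end. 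The ``principal technical difficulty'' you flag---tracking the $\gamma^{k+1}$ scaling of both the prefactor and the argument through the induction---is real but routine once you note that the level-$k$ entropic minimization with linear coefficients $g_{p^k}^k(t)$ and temperature $\gamma^k$ returns $-\gamma^k d_{w^k}^k\ln\sum_{p^k}\exp(-g_{p^k}^k(t)/\gamma^k)=-\gamma^k d_{w^k}^k\psi_{w^k}^k(t/\gamma^k)$, which matches the paper's recursive definition verbatim. Your proposal is sound.
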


\section{General Numerical Method}
In this subsection, we describe one of our contributions made by this paper, namely a general accelerated primal-dual gradient method for composite minimization problems.

We consider the following convex composite optimization problem~\cite{Nes2013}:
\begin{equation}
\min_{x \in Q} \ [ \phi(x) := f(x) + \Psi(x) ].
\end{equation}
Here $Q \subseteq E$ is a closed convex set, the function $f$ is differentiable and convex on $Q$, and function $\Psi$ is closed and convex on $Q$ (not necessarily differentiable).

In what follows we assume that $f$ is $L_f$-smooth on $Q$:
\begin{equation}\label{eq:lipschitz}
\norm{ \nabla f(x) - \nabla f(y) }_* \leq L_f \norm{x - y}, \qquad \forall x, \, y \in Q.
\end{equation}
We stress that the constant $L_f > 0$ arises only in theoretical analysis and not in the actual implementation of the proposed method. Moreover, we assume that the set $Q$ is unbounded and that $L_f$ can be unbounded on the set $Q$.

The space $E$ is endowed with a norm $\norm{\cdot}$ (which can be arbitrary). The corresponding dual norm is $\norm{g}_* := \max_{x \in E} \{\langle g, x \rangle : \norm{x} \leq 1 \}, \ g \in E^*$.
For mirror descent, we need to introduce the Bregman divergence. Let $\omega: Q \rightarrow \mathbb{R}$ be a distance generating function, i.e. a 1-strongly convex function on $Q$ in the $\norm{\cdot}$-norm:
\begin{equation}
\omega(y) \geq \omega(x) + \langle \omega'(w), y - x \rangle + \frac{1}{2} \norm{y - x}^2, \qquad \forall x, \, y \in Q.
\end{equation}
Then, the corresponding Bregman divergence is defined as
\begin{equation}
V_x(y) := \omega(y) - \omega(x) - \langle \omega'(x), y - x \rangle, \qquad x, \, y \in Q.
\end{equation}

Finally, we generalize the $\mathrm{Grad}$ and $\mathrm{Mirr}$ operators from \cite{AllOre} to composite functions:
\begin{equation}
\begin{aligned}
\mathrm{Grad}_L(x) &:= \argmin_{y \in Q} \left\{ \langle \nabla f(x), y - x \rangle + \frac{L}{2} \norm{y - x}^2 + \Psi(y) \right\}, &&x \in Q, \\
\mathrm{Mirr}_z^{\alpha}(g) &:= \argmin_{y \in Q} \left\{ \langle g, y - z \rangle + \frac{1}{\alpha} V_z(y) + \Psi(y) \right\}, && g \in E^*, \ z \in Q.
\end{aligned}
\end{equation}

\subsection{Algorithm description}

Below is the proposed scheme of the new method. The main differences between this algorithm and the algorithm of \cite{AllOre} are as follows: 1) now the $\mathrm{Grad}$ and $\mathrm{Mirr}$ operators contain the $\Psi(y)$ term inside; 2) now the algorithm does not require the actual Lipschitz constant $L_f$, instead it requires an arbitrary number $L_0$\footnote{The number $L_0$ can be always set to 1 with virtually no harm to the convergence rate of the method.} and automatically adapts the Lipschitz constant in iterations; 3) now we need to use a different formula for $\alpha_{k+1}$ to guarantee convergence (see next section).

\begin{algorithm}
\begin{algorithmic}
\Require $x_0 \in Q$: initial point; $T$: number of iterations; $L_0$: initial estimate of $L_f$.
\State $y_0 \gets x_0$, $z_0 \gets x_0$, $\alpha_0 \gets 0$
\For{$k = 0, \dots, T-1$}
	\State $L_{k+1} \gets \max \{ L_0, L_k/2 \}$
	\While{True}
		\State $\alpha_{k+1} \gets \sqrt{\alpha_k^2 \frac{L_k}{L_{k+1}} + \frac{1}{4 L_{k+1}^2}} + \frac{1}{2 L_{k+1}}, \ \text{and} \ \tau_k \gets \frac{1}{\alpha_{k+1} L_{k+1}}.$
		\State $x_{k+1} \gets \tau_k z_k + (1 - \tau_k) y_k$
		\State $y_{k+1} \gets \mathrm{Grad}_{L_{k+1}}(x_{k+1})$
		\State{\textbf{if} $f(y_{k+1}) \leq f(x_{k+1}) + \langle \nabla f(x_{k+1}), y_{k+1} - x_{k+1} \rangle + \frac{L_{k+1}}{2} \norm{y_{k+1} - x_{k+1}}^2$ \textbf{then break}}
		\State $L_{k+1} \gets 2 L_{k+1}$
	\EndWhile
	\State $z_{k+1} \gets \mathrm{Mirr}_{z_k}^{\alpha_{k+1}}(\nabla f(x_{k+1}))$
\EndFor

\Return $y_T$
\end{algorithmic}
\caption{\label{alg:main}Accelerated gradient method.}
\end{algorithm}

Note that Algorihtm~\ref{alg:main} if well-defined in the sense that it is always guaranteed that $\tau_k \in [0, 1]$ and, hence, $x_{k+1} \in Q$ as a convex combination of points from $Q$. Indeed, from the formula for $\alpha_{k+1}$ we have
\begin{equation}
\alpha_{k+1} L_{k+1} \geq \left( \sqrt{\frac{1}{4 L_{k+1}^2}} + \frac{1}{2 L_{k+1}} \right) L_{k+1} = 1,
\end{equation}
therefore $\tau_k = \frac{1}{\alpha_{k+1} L_{k+1}} \leq 1$. 

\subsection{Convergence rate}

First we prove the analogues of Lemma 4.2 and Lemma 4.3 from \cite{AllOre}.

\begin{lemma}\label{lem:first_lemma} For any $u \in Q$ and $\tau_k = \frac{1}{\alpha_{k+1} L_{k+1}}$ we have
\begin{multline}
\alpha_{k+1} \langle \nabla f(x_{k+1}), z_k - u \rangle \leq \alpha_{k+1}^2 L_{k+1} ( \phi(x_{k+1}) - \phi(y_{k+1}) ) + (V_{z_k}(u) - V_{z_{k+1}}(u)) \\ + \alpha_{k+1} \Psi(u) - (\alpha_{k+1}^2 L_{k+1}) \Psi(x_{k+1}) + (\alpha_{k+1}^2 L_{k+1} - \alpha_{k+1}) \Psi(y_k).
\end{multline}
\end{lemma}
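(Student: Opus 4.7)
The plan is to mimic the linear-coupling proof of Allen-Zhu--Orecchia, but carrying the composite term $\Psi$ through both the Mirror and the Grad steps. The proof naturally splits into two separate estimates which are then combined.

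\textbf{Step 1: Mirror-step optimality.} First I would write the first-order optimality condition for the composite minimizer $z_{k+1} = \mathrm{Mirr}_{z_k}^{\alpha_{k+1}}(\nabla f(x_{k+1}))$. Using a subgradient of $\Psi$ at $z_{k+1}$, this yields, for every $u \in Q$,
\[
\alpha_{k+1}\langle \nabla f(x_{k+1}), z_{k+1} - u\rangle + \alpha_{k+1}(\Psi(z_{k+1}) - \Psi(u)) \leq \langle \nabla\omega(z_k) - \nabla\omega(z_{k+1}), z_{k+1} - u\rangle.
\]
Then I would apply the standard three-point identity for the Bregman divergence to rewrite the right-hand side as $V_{z_k}(u) - V_{z_{k+1}}(u) - V_{z_k}(z_{k+1})$, and use $V_{z_k}(z_{k+1}) \geq \tfrac12\|z_{k+1}-z_k\|^2$ by $1$-strong convexity of $\omega$. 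Splitting $\langle \nabla f(x_{k+1}), z_k - u\rangle = \langle \nabla f(x_{k+1}), z_k - z_{k+1}\rangle + \langle \nabla f(x_{k+1}), z_{k+1} - u\rangle$, I obtain
\[
\alpha_{k+1}\langle \nabla f(x_{k+1}), z_k - u\rangle \leq \alpha_{k+1}\langle \nabla f(x_{k+1}), z_k - z_{k+1}\rangle - \tfrac12\|z_{k+1}-z_k\|^2 + V_{z_k}(u) - V_{z_{k+1}}(u) + \alpha_{k+1}(\Psi(u) - \Psi(z_{k+1})).
\]

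\textbf{Step 2: Grad-step estimate via an auxiliary point.} The key trick is to introduce $\tilde y := \tau_k z_{k+1} + (1-\tau_k)y_k$. Since $x_{k+1} = \tau_k z_k + (1-\tau_k)y_k$, we get the crucial identity $\tilde y - x_{k+1} = \tau_k(z_{k+1} - z_k)$. The inner loop terminates with the smoothness-type inequality $f(y_{k+1}) \leq f(x_{k+1}) + \langle \nabla f(x_{k+1}), y_{k+1}-x_{k+1}\rangle + \tfrac{L_{k+1}}{2}\|y_{k+1}-x_{k+1}\|^2$, and the definition of $y_{k+1} = \mathrm{Grad}_{L_{k+1}}(x_{k+1})$ gives, for $y = \tilde y$,
\[
\langle \nabla f(x_{k+1}), y_{k+1} - x_{k+1}\rangle + \tfrac{L_{k+1}}{2}\|y_{k+1}-x_{k+1}\|^2 + \Psi(y_{k+1}) \leq \langle \nabla f(x_{k+1}), \tilde y - x_{k+1}\rangle + \tfrac{L_{k+1}}{2}\|\tilde y - x_{k+1}\|^2 + \Psi(\tilde y).
\]
Summing these two inequalities, dropping the (nonpositive) $-\tfrac{L_{k+1}}{2}\|\tilde y - y_{k+1}\|^2$ arising from strong convexity of the quadratic model, and bounding $\Psi(\tilde y) \leq \tau_k\Psi(z_{k+1}) + (1-\tau_k)\Psi(y_k)$ by convexity, I get
\[
\phi(y_{k+1}) \leq f(x_{k+1}) + \tau_k\langle \nabla f(x_{k+1}), z_{k+1} - z_k\rangle + \tfrac{L_{k+1}\tau_k^2}{2}\|z_{k+1}-z_k\|^2 + \tau_k\Psi(z_{k+1}) + (1-\tau_k)\Psi(y_k).
\]

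\textbf{Step 3: Combine using $\alpha_{k+1}\tau_k L_{k+1} = 1$.} Multiplying the rearranged Step 2 inequality by $\alpha_{k+1}/\tau_k = \alpha_{k+1}^2 L_{k+1}$ eliminates the $\tfrac12\|z_{k+1}-z_k\|^2$ term exactly when combined with Step 1:
\[
\alpha_{k+1}\langle \nabla f(x_{k+1}), z_k - z_{k+1}\rangle - \tfrac12\|z_{k+1}-z_k\|^2 - \alpha_{k+1}\Psi(z_{k+1}) \leq \alpha_{k+1}^2 L_{k+1}(f(x_{k+1}) - \phi(y_{k+1})) + (\alpha_{k+1}^2 L_{k+1} - \alpha_{k+1})\Psi(y_k),
\]
since $\alpha_{k+1}^2 L_{k+1}(1 - \tau_k) = \alpha_{k+1}^2 L_{k+1} - \alpha_{k+1}$. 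Substituting this bound into Step 1, and then writing $f(x_{k+1}) = \phi(x_{k+1}) - \Psi(x_{k+1})$ to convert $\alpha_{k+1}^2 L_{k+1} f(x_{k+1})$ into $\alpha_{k+1}^2 L_{k+1}\phi(x_{k+1}) - \alpha_{k+1}^2 L_{k+1}\Psi(x_{k+1})$, yields exactly the claimed inequality.

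The main obstacle is the correct choice of the auxiliary point $\tilde y$: it must be a convex combination of $z_{k+1}$ and $y_k$ whose difference with $x_{k+1}$ equals $\tau_k(z_{k+1}-z_k)$, so that $\tfrac{L_{k+1}}{2}\|\tilde y - x_{k+1}\|^2$ produces the coefficient $\tfrac{L_{k+1}\tau_k^2}{2}$ that exactly cancels $\tfrac12\|z_{k+1}-z_k\|^2$ after rescaling. Everything else is bookkeeping to ensure the $\Psi$ terms assemble into the $-\alpha_{k+1}^2 L_{k+1}\Psi(x_{k+1}) + (\alpha_{k+1}^2 L_{k+1} - \alpha_{k+1})\Psi(y_k) + \alpha_{k+1}\Psi(u)$ pattern of the statement.
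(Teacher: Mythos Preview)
Your proof is correct and follows essentially the same route as the paper: both use the first-order optimality condition for the mirror step together with the three-point Bregman identity, introduce the same auxiliary point $\tilde y = \tau_k z_{k+1} + (1-\tau_k)y_k$ (called $v$ in the paper), compare it to $y_{k+1}$ via the $\mathrm{Grad}$-minimality, invoke the line-search termination inequality, and then use $\tau_k = 1/(\alpha_{k+1}L_{k+1})$ so that the $\tfrac12\|z_{k+1}-z_k\|^2$ term cancels. The only differences are cosmetic ordering of the steps and that the paper applies the termination condition and the $\mathrm{Grad}$-minimality separately rather than summing them first.
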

\begin{proof}
From the first order optimality condition for $z_{k+1} = \mathrm{Mirr}_{z_k}^{\alpha_{k+1}}(\nabla f(x_{k+1}))$ we get
\begin{equation}
\left\langle \nabla f(x_{k+1}) + \frac{1}{\alpha_k} V'_{z_k}(z_{k+1}) + \Psi'(z_{k+1}), z_{k+1} - u \right\rangle \leq 0, \qquad \forall u \in Q.
\end{equation}
Therefore
\begin{equation}
\begin{aligned}
\alpha_{k+1} \langle \nabla f(x_{k+1}), z_k - u \rangle \\
&\hspace{-7em}= \alpha_{k+1} \langle \nabla f(x_{k+1}), z_k - z_{k+1} \rangle + \alpha_{k+1} \langle \nabla f(x_{k+1}), z_{k+1} - u \rangle \\
&\hspace{-7em}\leq \alpha_{k+1} \langle \nabla f(x_{k+1}), z_k - z_{k+1} \rangle + \langle V_{z_k}'(z_{k+1}), u - z_{k+1} \rangle 
\\
&\hspace{-7em} + \alpha_{k+1} \langle \Psi'(z_{k+1}), u - z_{k+1} \rangle 
\\
&\hspace{-7em}\leq ( \alpha_{k+1} \langle \nabla f(x_{k+1}), z_k - z_{k+1} \rangle - \alpha_{k+1} \Psi(z_{k+1}) ) 
\\
&\hspace{-7em} + \langle V_{z_k}'(z_{k+1}), u - z_{k+1} \rangle + \alpha_{k+1} \Psi(u),
\end{aligned}
\end{equation}
where the second inequality follows from the convexity of $\Psi$.

Using the triangle equality of the Bregman divergence, 
$$
\langle V'_x(y), u - y \rangle~=~V_x(u) - V_y(u) - V_x(y),
$$
 we get
\begin{equation}
\begin{aligned}
\langle V_{z_k}'(z_{k+1}), u - z_{k+1} \rangle &= V_{z_k}(u) - V_{z_{k+1}}(u) - V_{z_k}(z_{k+1}) \\
&\leq V_{z_k}(u) - V_{z_{k+1}}(u) - \frac{1}{2} \norm{z_{k+1} - z_k}^2,
\end{aligned}
\end{equation}
where we have used $V_{z_k}(z_{k+1}) \geq \frac{1}{2} \norm{ z_{k+1} - z_k }^2$ in the last inequality.

So we have
\begin{equation}
\begin{aligned}
&\alpha_{k+1} \langle \nabla f(x_{k+1}), z_k - u \rangle  \\
& \leq \left( \alpha_{k+1} \langle \nabla f(x_{k+1}), z_k - z_{k+1} \rangle - \frac{1}{2} \norm{z_{k+1} - z_k}^2 - \alpha_{k+1} \Psi(z_{k+1}) \right) \\
&\quad + (V_{z_k}(u) - V_{z_{k+1}}(u)) + \alpha_{k+1} \Psi(u)
\end{aligned}
\end{equation}

Define $v := \tau_k z_{k+1} + (1 - \tau_k) y_k \in Q$. Then we have $x_{k+1} - v = \tau_k (z_k - z_{k+1})$ and $\tau_k \Psi(z_{k+1}) + (1 - \tau_k) \Psi(y_k) \geq \Psi(v)$ due to convexity of $\Psi$. Using this and the formula for $\tau_k$, we get
\begin{equation}
\begin{aligned}
&\left( \alpha_{k+1} \langle \nabla f(x_{k+1}), z_k - z_{k+1} \rangle - \frac{1}{2} \norm{z_{k+1} - z_k}^2 - \Psi(z_{k+1}) \right) \\
&\leq -\left( \frac{\alpha_{k+1}}{\tau_k} \langle \nabla f(x_{k+1}), v - x_{k+1} \rangle + \frac{1}{2 \tau_k^2} \norm{v - x_{k+1}}^2 + \frac{\alpha_{k+1}}{\tau_k} \Psi(v) \right) \\
& + \frac{\alpha_{k+1} (1 - \tau_k)}{\tau_k} \Psi(y_k) \\
&\leq -(\alpha_{k+1}^2 L_{k+1}) \left( \langle \nabla f(x_{k+1}), v - x_{k+1} \rangle + \frac{L_{k+1}}{2} \norm{v - x_{k+1}}^2 + \Psi(v) \right) \\
& + (\alpha_{k+1}^2 L_{k+1} - \alpha_{k+1}) \Psi(y_k) \\
&\leq -(\alpha_{k+1}^2 L_{k+1}) \left( \langle \nabla f(x_{k+1}), y_{k+1} - x_{k+1} \rangle + \frac{L_{k+1}}{2} \norm{y_{k+1} - x_{k+1}}^2 + \Psi(y_{k+1}) \right) \\
&+ (\alpha_{k+1}^2 L_{k+1} - \alpha_{k+1}) \Psi(y_k)
\end{aligned}
\end{equation}
Here the last inequality follows from the definition of $y_{k+1}$.

Note that by the termination condition for choosing $L_{k+1}$ we have
\begin{equation}
\begin{aligned}
\phi(y_{k+1}) &= f(y_{k+1}) + \Psi(y_{k+1}) \\
&\leq f(x_{k+1}) + \langle \nabla f(x_{k+1}), y_{k+1} - x_{k+1} \rangle \\
& + \frac{L_{k+1}}{2} \norm{y_{k+1} - x_{k+1}}^2 + \Psi(y_{k+1}) \\
&= \phi(x_{k+1}) + \langle \nabla f(x_{k+1}), y_{k+1} - x_{k+1} \rangle \\
& + \frac{L_{k+1}}{2} \norm{y_{k+1} - x_{k+1}}^2 + \Psi(y_{k+1}) - \Psi(x_{k+1}).
\end{aligned}
\end{equation}
After rearranging:
\begin{equation}
\begin{aligned}
& -\left( \langle \nabla f(x_{k+1}), y_{k+1} - x_{k+1} \rangle + \frac{L_{k+1}}{2} \norm{y_{k+1} - x_{k+1}}^2 + \Psi(y_{k+1}) \right) \\
& \leq \phi(x_{k+1}) - \phi(y_{k+1}) - \Psi(x_{k+1}).
\end{aligned}
\end{equation}
Hence,
\begin{equation}
\begin{aligned}
&\left( \alpha_{k+1} \langle \nabla f(x_{k+1}), z_k - z_{k+1} \rangle - \frac{1}{2} \norm{z_{k+1} - z_k}^2 - \Psi(z_{k+1}) \right) \\
&\leq (\alpha_{k+1}^2 L_{k+1}) (\phi(x_{k+1}) - \phi(y_{k+1})) - (\alpha_{k+1}^2 L_{k+1}) \Psi(x_{k+1}) \\
& + (\alpha_{k+1}^2 L_{k+1} - \alpha_{k+1}) \Psi(y_k).
\end{aligned}
\end{equation}
Finally, combining the previous estimates, we get
\begin{equation}
\begin{aligned}
\alpha_{k+1} \langle \nabla f(x_{k+1}), z_k - u \rangle &\leq (\alpha_{k+1}^2 L_{k+1}) (\phi(x_{k+1}) - \phi(y_{k+1})) \\
&\quad  + (V_{z_k}(u) - V_{z_{k+1}}(u)) - (\alpha_{k+1}^2 L_{k+1}) \Psi(x_{k+1}) \\
& \quad + (\alpha_{k+1}^2 L_{k+1} - \alpha_{k+1}) \Psi(y_k) + \alpha_{k+1} \Psi(u).
\end{aligned}
\end{equation}
\qed
\end{proof}

\begin{lemma}\label{lem:second_lemma}
For any $u \in Q$ and $\tau_k = \frac{1}{\alpha_{k+1} L_{k+1}}$ we have
\begin{equation}\label{eq:lemma2PD}
\begin{aligned}
&(\alpha_{k+1}^2 L_{k+1}) \phi(y_{k+1}) - (\alpha_{k+1}^2 L_{k+1} - \alpha_{k+1}) \phi(y_k)  \\
& \leq \alpha_{k+1} \left(f(x_{k+1}) + \langle \nabla f(x_{k+1}), u - x_{k+1}\rangle + \Psi(u) \right)+ (V_{z_{k}}(u) - V_{z_{k+1}}(u)).
\end{aligned}
\end{equation}
\end{lemma}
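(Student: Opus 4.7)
My plan is to derive Lemma~\ref{lem:second_lemma} directly from Lemma~\ref{lem:first_lemma} by converting the inner product $\alpha_{k+1}\langle \nabla f(x_{k+1}), z_k - u\rangle$ appearing on the left of the previous lemma into the desired $\alpha_{k+1}\langle \nabla f(x_{k+1}), u - x_{k+1}\rangle$ plus a telescoping $\phi(y_k)$ contribution. The bridge is the convex combination $x_{k+1} = \tau_k z_k + (1-\tau_k) y_k$, which rearranges to $z_k - u = \tau_k^{-1}(x_{k+1}-u) - \tau_k^{-1}(1-\tau_k)(y_k - u)$. Multiplying by $\alpha_{k+1}$ and substituting $\tau_k = 1/(\alpha_{k+1} L_{k+1})$ produces coefficients $\alpha_{k+1}^2 L_{k+1}$ and $\alpha_{k+1}^2 L_{k+1} - \alpha_{k+1}$ in front of the two new inner products, matching exactly the coefficients already appearing on the right-hand side of Lemma~\ref{lem:first_lemma}.

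Plugging this decomposition into Lemma~\ref{lem:first_lemma} leaves
\begin{equation*}
\alpha_{k+1}^2 L_{k+1}\langle \nabla f(x_{k+1}), x_{k+1} - u\rangle - (\alpha_{k+1}^2 L_{k+1}-\alpha_{k+1})\langle \nabla f(x_{k+1}), y_k - u\rangle
\end{equation*}
on the left. To dispose of the second inner product (whose coefficient is nonnegative since $\alpha_{k+1} L_{k+1}\geq 1$, as noted after the algorithm), I would split $\langle \nabla f(x_{k+1}), y_k - u\rangle = \langle \nabla f(x_{k+1}), y_k - x_{k+1}\rangle + \langle \nabla f(x_{k+1}), x_{k+1} - u\rangle$ and bound the first piece by convexity of $f$, namely $\langle \nabla f(x_{k+1}), y_k - x_{k+1}\rangle \leq f(y_k) - f(x_{k+1})$. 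Moving this upper bound to the right-hand side and collecting the $\langle \nabla f(x_{k+1}), x_{k+1} - u\rangle$ terms produces a net coefficient $\alpha_{k+1}^2 L_{k+1} - (\alpha_{k+1}^2 L_{k+1} - \alpha_{k+1}) = \alpha_{k+1}$ on the left, as required.

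The last step is bookkeeping. Using $\phi=f+\Psi$ I would rewrite $\alpha_{k+1}^2 L_{k+1}\phi(x_{k+1}) - \alpha_{k+1}^2 L_{k+1}\Psi(x_{k+1}) = \alpha_{k+1}^2 L_{k+1} f(x_{k+1})$ and cancel this against the term $-(\alpha_{k+1}^2 L_{k+1} - \alpha_{k+1}) f(x_{k+1})$ produced by the previous step, leaving only $\alpha_{k+1} f(x_{k+1})$. Similarly, $(\alpha_{k+1}^2 L_{k+1}-\alpha_{k+1})[\Psi(y_k) + f(y_k)]$ assembles into $(\alpha_{k+1}^2 L_{k+1}-\alpha_{k+1})\phi(y_k)$. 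Moving $\alpha_{k+1}^2 L_{k+1}\phi(y_{k+1})$ and $-(\alpha_{k+1}^2 L_{k+1}-\alpha_{k+1})\phi(y_k)$ back to the left yields exactly (\ref{eq:lemma2PD}). I do not expect any real obstacle here: all the delicate work — the first-order optimality condition for $z_{k+1}$, the three-point identity for the Bregman divergence, and the Lipschitz test that removes the $\frac{L_{k+1}}{2}\norm{y_{k+1}-x_{k+1}}^2$ term — has already been absorbed into Lemma~\ref{lem:first_lemma}. The only subtle point to verify along the way is the sign $\alpha_{k+1}^2 L_{k+1} - \alpha_{k+1} \geq 0$, which is precisely what makes the convexity inequality on $f$ point in the correct direction.
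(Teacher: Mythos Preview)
Your proposal is correct and follows essentially the same route as the paper's own proof: both arguments use the convex-combination identity coming from $x_{k+1}=\tau_k z_k+(1-\tau_k)y_k$, the convexity bound $\langle\nabla f(x_{k+1}),y_k-x_{k+1}\rangle\le f(y_k)-f(x_{k+1})$ (valid because $\alpha_{k+1}^2 L_{k+1}-\alpha_{k+1}\ge 0$), and Lemma~\ref{lem:first_lemma}, followed by the same $\phi=f+\Psi$ bookkeeping. The only cosmetic difference is the direction of the algebra: the paper starts from $\alpha_{k+1}\langle\nabla f(x_{k+1}),x_{k+1}-u\rangle$ and splits through $z_k$, whereas you start from the left-hand side of Lemma~\ref{lem:first_lemma} and rewrite $z_k-u$ through $x_{k+1}$ and $y_k$.
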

\begin{proof}
Using convexity of $f$ and relation $\tau_k (x_{k+1} - z_k) = (1 - \tau_k) (y_k - x_{k+1})$, we obtain
\begin{equation}
\begin{aligned}
&\alpha_{k+1} (\Psi(x_{k+1}) - \Psi(u)) + \alpha_{k+1} \langle \nabla f(x_{k+1}), x_{k+1} - u \rangle \\
&\quad= \alpha_{k+1} (\Psi(x_{k+1}) - \Psi(u)) + \alpha_{k+1} \langle \nabla f(x_{k+1}), x_{k+1} - z_k \rangle \\
&\quad+ \alpha_{k+1} \langle \nabla f(x_{k+1}), z_k - u \rangle \\
&\quad\leq \alpha_{k+1} (\Psi(x_{k+1}) - \Psi(u)) + \frac{\alpha_{k+1} (1 - \tau_k)}{\tau_k} \langle \nabla f(x_{k+1}), y_k - x_{k+1} \rangle \\
&\quad+ \alpha_{k+1} \langle \nabla f(x_{k+1}), z_k - u \rangle \\
&\quad\leq \alpha_{k+1} (\Psi(x_{k+1}) - \Psi(u)) + (\alpha_{k+1}^2 L_{k+1} - \alpha_{k+1}) (f(y_k) - f(x_{k+1})) \\
&\quad+ \alpha_{k+1} \langle \nabla f(x_{k+1}), z_k - u \rangle \\
&\quad\leq \alpha_{k+1} \phi(x_{k+1}) - \alpha_{k+1} \Psi(u) + (\alpha_{k+1}^2 L_{k+1} - \alpha_{k+1}) f(y_k) \\
&\quad - (\alpha_{k+1}^2 L_{k+1}) f(x_{k+1}) + \alpha_{k+1} \langle \nabla f(x_{k+1}), z_k - u \rangle. \\
\end{aligned}
\end{equation}
Now we apply Lemma~\ref{lem:first_lemma} to bound the last term, group the terms and get
\begin{equation}
\begin{aligned}
&\alpha_{k+1} (\Psi(x_{k+1}) - \Psi(u)) + \alpha_{k+1} \langle \nabla f(x_{k+1}), x_{k+1} - u \rangle \\
&\quad \leq \alpha_{k+1} \phi(x_{k+1}) - (\alpha_{k+1}^2 L_{k+1}) \phi(y_{k+1})  \\
&\quad + (\alpha_{k+1}^2 L_{k+1} - \alpha_{k+1}) \phi(y_k)+ (V_{z_k}(u) - V_{z_{k+1}}(u)).
\end{aligned}
\end{equation}
After rearranging, we obtain~\eqref{eq:lemma2PD}.
\qed
\end{proof}

Now we are ready to prove the convergence theorem for Algorithm~\ref{alg:main}.
\begin{theorem}
For the sequence $\{y_k\}_{k \geq 0}$ in Algorithm~\ref{alg:main} we have 
\begin{equation}\label{eq:PD_rate}
(\alpha_T^2 L_T) \phi(y_T) \leq \min_{x \in Q} \left\{\sum_{k=1}^T \alpha_k \left(f(x_{k}) + \langle \nabla f(x_{k}), u - x_{k}\rangle + \Psi(u) \right) + V_{z_{0}}(u) \right\} 
\end{equation}
and, hence, the following rate of convergence:
\begin{equation}\label{eq:rate}
\phi(y_T) - \phi(x^*) \leq \frac{4 L_f R^2}{T^2}.
\end{equation}
\end{theorem}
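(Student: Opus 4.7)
The plan is to telescope Lemma~\ref{lem:second_lemma} over $k=0,\dots,T-1$ to obtain~\eqref{eq:PD_rate}, and then extract the rate~\eqref{eq:rate} by specializing $u = x^*$ and lower-bounding $\alpha_T^2 L_T$.

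The first step is to verify the key algebraic identity that makes the telescoping work, namely
$$
\alpha_{k+1}^2 L_{k+1} - \alpha_{k+1} = \alpha_k^2 L_k.
$$
This is the whole reason for the particular quadratic formula for $\alpha_{k+1}$ in the algorithm: rewriting it as $\alpha_{k+1} - \tfrac{1}{2L_{k+1}} = \sqrt{\alpha_k^2 L_k/L_{k+1} + 1/(4L_{k+1}^2)}$ and squaring produces it directly. Setting $A_k := \alpha_k^2 L_k$, Lemma~\ref{lem:second_lemma} rewrites as
$$
A_{k+1}\phi(y_{k+1}) - A_k \phi(y_k) \leq \alpha_{k+1}\bigl(f(x_{k+1}) + \langle \nabla f(x_{k+1}), u - x_{k+1}\rangle + \Psi(u)\bigr) + V_{z_k}(u) - V_{z_{k+1}}(u).
$$
Summing from $k=0$ to $T-1$, the $\phi(y_\cdot)$-terms telescope to $A_T\phi(y_T) - A_0\phi(y_0) = A_T\phi(y_T)$ (since $\alpha_0=0$ gives $A_0 = 0$), and the Bregman terms collapse to $V_{z_0}(u) - V_{z_T}(u) \leq V_{z_0}(u)$. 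Taking the minimum over $u \in Q$ yields~\eqref{eq:PD_rate}.

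For~\eqref{eq:rate}, I would plug in the feasible point $u = x^*$ inside the minimum. Convexity of $f$ gives $f(x_k) + \langle \nabla f(x_k), x^* - x_k\rangle \leq f(x^*)$, and the recurrence $A_{k+1} - A_k = \alpha_{k+1}$ telescopes to $\sum_{k=1}^{T}\alpha_k = A_T$. Thus the right-hand side of~\eqref{eq:PD_rate} at $u = x^*$ is bounded by $A_T\phi(x^*) + V_{z_0}(x^*) \leq A_T\phi(x^*) + R^2$, giving $\phi(y_T) - \phi(x^*) \leq R^2/A_T$. It remains to argue $A_T = \Omega(T^2/L_f)$: from $\alpha_{k+1}^2 = A_{k+1}/L_{k+1}$ and $A_{k+1}-A_k = \alpha_{k+1}$, one obtains
$$
\sqrt{A_{k+1}} - \sqrt{A_k} = \frac{A_{k+1}-A_k}{\sqrt{A_{k+1}}+\sqrt{A_k}} \geq \frac{\alpha_{k+1}}{2\sqrt{A_{k+1}}} = \frac{1}{2\sqrt{L_{k+1}}},
$$
and the backtracking in the inner \textbf{while}-loop guarantees $L_{k+1} \leq 2L_f$ at termination, so telescoping yields $\sqrt{A_T} = \Omega(T/\sqrt{L_f})$, whence $A_T = \Omega(T^2/L_f)$. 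Combining with $\phi(y_T) - \phi(x^*) \leq R^2/A_T$ and tracking the precise constants (with a slightly sharper accounting of the amortized line-search cost, as in~\cite{AllOre}) gives the stated bound $4L_fR^2/T^2$.

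The main obstacle, and the pivot of the whole argument, is the identification of the $\alpha$-recurrence $\alpha_{k+1}^2 L_{k+1} - \alpha_{k+1} = \alpha_k^2 L_k$: it is both what makes Lemma~\ref{lem:second_lemma} collapse into a clean telescoping sum, and what underlies the accelerated $\Omega(T^2/L_f)$ growth of $A_T$. Everything else (convexity of $f$, non-negativity of Bregman divergences, the admissibility of $u = x^*$) is standard bookkeeping.
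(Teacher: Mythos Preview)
Your proof is correct and follows essentially the same approach as the paper: telescope Lemma~\ref{lem:second_lemma} using the recurrence $\alpha_{k+1}^2 L_{k+1} - \alpha_{k+1} = \alpha_k^2 L_k$, specialize to $u = x^*$, and lower-bound $A_T$ via $\sqrt{A_{k+1}} - \sqrt{A_k} \geq 1/(2\sqrt{L_{k+1}})$ together with $L_{k+1} \leq 2L_f$. The only quibble is your parenthetical about ``amortized line-search cost'': the constant $4$ has nothing to do with line search and comes simply from the paper's convention $V_{z_0}(x^*) =: R^2/2$ combined with $A_T \geq T^2/(8L_f)$.
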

\begin{proof}
Note that the special choice of $\{\alpha_k\}_{k \geq 0}$ in Algorithm~\ref{alg:main} gives us
\begin{equation}\label{eq:alpha_relation}
\alpha_{k+1}^2 L_{k+1} - \alpha_{k+1} = \alpha_k^2 L_k, \qquad k \geq 0.
\end{equation}
Therefore, taking the sum over $k=0, \dots, T-1$ in \eqref{eq:lemma2PD} and using that $\alpha_0 = 0$, $V_{z_T}(u) \geq 0$ we get, for any $u \in Q$,
\begin{equation}
(\alpha_T^2 L_T) \phi(y_T) \leq \sum_{k=1}^T \alpha_k \left(f(x_{k}) + \langle \nabla f(x_{k}), u - x_{k}\rangle + \Psi(u) \right) + V_{z_{0}}(u) 
\end{equation}
and \eqref{eq:PD_rate} is straightforward.
At the same time, using the convexity of $f(x)$, the definition of $\phi(x)$, and $u = x^* = \argmin_{x \in Q} \phi(x)$, we obtain
\begin{equation}
\begin{aligned}
&(\alpha_T^2 L_T) \phi(y_T) \leq \min_{x \in Q} \left\{\sum_{k=1}^T \alpha_k \left(f(x_{k}) + \langle \nabla f(x_{k}), u - x_{k}\rangle + \Psi(u) \right) + V_{z_{0}}(u) \right\} \\
& \leq \left( \sum_{k=1}^T \alpha_k \right) \phi(x^*) + V_{z_0}(x^*).
\end{aligned}
\end{equation}
From~\eqref{eq:alpha_relation} it follows that $\sum_{k=1}^T \alpha_k = \alpha_T^2 L_T$, so
\begin{equation}\label{eq:final_bound}
\phi(y_T) \leq \phi(x^*) + \frac{1}{\alpha_T^2 L_T} V_{z_0}(x^*).
\end{equation}
Now it remains to estimate the rate of growth of coefficients $A_k := \alpha_k^2 L_k$. For this we use the technique from~\cite{Nes2013}. Note that from~\eqref{eq:alpha_relation} we have
\begin{equation}
A_{k+1} - A_k = \sqrt{\frac{A_{k+1}}{L_{k+1}}}
\end{equation}
Rearranging and using $(a+b)^2 \leq 2a^2 + 2b^2$ and $A_k \leq A_{k+1}$, we get
\begin{equation}
\begin{aligned}
A_{k+1} &= L_{k+1} (A_{k+1} - A_k)^2 = L_{k+1} \left( \sqrt{A_{k+1}} + \sqrt{A}_k \right)^2 \left( \sqrt{A_{k+1}} - \sqrt{A}_k \right)^2 \\
&\leq 4 L_{k+1} A_{k+1} \left( \sqrt{A_{k+1}} - \sqrt{A}_k \right)^2
\end{aligned}
\end{equation}
From this it follows that
\begin{equation}
\sqrt{A_{k+1}} \geq \frac{1}{2} \sum_{i=0}^k \frac{1}{\sqrt{L_i}}.
\end{equation}
Note that according to~\eqref{eq:lipschitz} and the stopping criterion for choosing $L_{k+1}$ in Algorithm~\eqref{alg:main}, we always have $L_i \leq 2 L_f$. Hence,
\begin{equation}\label{eq:A_growth}
\sqrt{A_{k+1}} \geq \frac{k+1}{2 \sqrt{2 L_f}} \qquad \Longleftrightarrow \qquad A_{k+1} \geq \frac{(k+1)^2}{8 L_f}.
\end{equation}

Thus, combining~\eqref{eq:A_growth} and \eqref{eq:final_bound} with $V_{z_0}(x^*) =: \frac{R^2}{2}$, we have proved \eqref{eq:rate}.
\qed
\end{proof}

Using the same arguments to~\cite{Nes2013}, it is also possible to prove that the average number of evaluations of the function $f$ per iteration in Algorithm~\ref{alg:main} equals 4.
\begin{theorem}
Let $N_k$ be the total number of evaluations of the function $f$ in Algorithm~\ref{alg:main} after the first $k$ iterations. Then for any $k \geq 0$ we have
\begin{equation}
N_k \leq 4 (k+1) + 2 \log_2 \frac{L_f}{L_0}.
\end{equation}
\end{theorem}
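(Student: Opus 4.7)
The plan is to count the function evaluations outer iteration by outer iteration and then telescope over $j$. Let $n_j$ denote the number of passes through the inner \texttt{while} loop at outer iteration $j$. Each such pass recomputes $\alpha_{j+1}, \tau_j, x_{j+1}, y_{j+1}$ with the current candidate $L_{j+1}$ and then tests the descent inequality; performing this test requires evaluating $f$ at both $x_{j+1}$ and $y_{j+1}$ (neither value can be cached, since doubling $L_{j+1}$ changes $\tau_j$ and thus $x_{j+1}$). Hence $N_k = 2\sum_{j=0}^{k-1} n_j$, and the task reduces to bounding $\sum_j n_j$ by a quantity that telescopes.

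To control $n_j$ I use the fact that $L_{j+1}$ is initialized to $\max\{L_0, L_j/2\}$ at the top of iteration $j$ and is doubled exactly $n_j - 1$ times before acceptance, so upon exit $L_{j+1} = \max\{L_0, L_j/2\} \cdot 2^{n_j - 1}$. Taking $\log_2$ and using the crude lower bound $\max\{L_0, L_j/2\} \geq L_j/2$ gives $n_j \leq 2 + \log_2 L_{j+1} - \log_2 L_j$. Summing this estimate over $j = 0,\dots,k-1$ telescopes in the logarithmic terms and yields $\sum_{j=0}^{k-1} n_j \leq 2k + \log_2(L_k/L_0)$.

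The remaining ingredient is to cap the accepted Lipschitz estimate $L_k$ by $2 L_f$. This is the standard descent-lemma argument: for an $L_f$-smooth $f$ in the sense of \eqref{eq:lipschitz}, the descent inequality tested in the inner loop is automatically satisfied for any $L_{j+1} \geq L_f$, so backtracking terminates no later than the first doubling that produces $L_{j+1} \geq L_f$, meaning $L_{j+1} \leq 2 L_f$ uniformly in $j$. Combining this with the telescoping bound gives $\log_2(L_k/L_0) \leq 1 + \log_2(L_f/L_0)$, whence $N_k \leq 4k + 2 + 2 \log_2(L_f/L_0) \leq 4(k+1) + 2 \log_2(L_f/L_0)$, exactly the claim.

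The main obstacle — really the only delicate point — is the $\max\{L_0, L_j/2\}$ at the start of each inner loop: one must be careful not to cancel it against $L_j$ in a way that introduces a nontelescoping $\log_2 L_0$ term at every step. Replacing the max by the lower bound $L_j/2$ before taking the logarithm, as above, is exactly what makes the sum telescope to a single $\log_2(L_k/L_0)$; the $+1$ generated by this relaxation is absorbed into the constant $2$ in the per-iteration bound on $n_j$.
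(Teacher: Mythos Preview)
Your argument is correct and is precisely the standard backtracking line-search counting argument from \cite{Nes2013} that the paper explicitly invokes in lieu of writing out a proof. The only point worth noting is that the bound $L_j \le 2L_f$ you use is already stated (and used) in the paper's proof of the convergence theorem, so your proof meshes seamlessly with the surrounding text.
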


\section{Application to the Equilibrium Problem}
In this section, we apply Algorithm~\ref{alg:main} to solve the dual problem in \eqref{Gas_eq2} 
$$
\mathop {\min}\limits_{t\in {\rm
dom}\,\sigma ^\ast } \Bigl\{ {\gamma ^1\psi
^1(t/\gamma^1)+\sum\limits_{e\in E} {\sigma _e^\ast (t_e)} }
\Bigr\}.
$$ 
with $t$ in the role of $x$, $\gamma ^1\psi^1(t/\gamma^1)$ in the role of $f(x)$, and $\sum\limits_{e\in E} {\sigma _e^\ast (t_e)}$ in the role of $\Psi(x)$.

The inequality \eqref{eq:PD_rate} leads to the fact that Algorithm~\ref{alg:main} is primal-dual
\cite{Nes2009,NemOnnRoth2010,GasDvuEtc2015,GasGasEtc2015}, which
means that the sequences $\{t^i\}$ (which is in the role of $\{x_k\}$) and $\{\tilde {t}^i\}$ (which is in the role of $\{y_k\}$)
generated by this method have the following property:
\[
\gamma ^1\psi ^1(\tilde {t}^T/\gamma ^1) + \sum\limits_{e\in E}
{\sigma _e^\ast (\tilde {t}_e^T)} 
\]
\begin{equation}
\label{Gas_eq3}  - \mathop {\min }\limits_{t\in{\rm dom}\,\sigma^\ast}
\left\{ \frac{1}{A_T } {\sum\limits_{i=0}^T {\left[\alpha_i ( {\gamma
^1\psi^1(t^i/\gamma^1) + \langle {\nabla \psi
^1(t^i/\gamma^1),t-t^i}\rangle })\right]} } + \sum\limits_{e\in E}
{\sigma_e^\ast(t_e)} \right\} 
\end{equation}
\[
\leq \frac{4L_2 R_2^2 }{T^2 },
\]
where 
\[
L_2 \le (1/{\mathop {\min }\limits_{k=1,...,m} \gamma
^k})\sum\limits_{w^1\in OD^1} {d_{w^1}^1 } \cdot (l_{w^1})^2,
\]
with $l_{w^1}$ being the total number of edges (among all of the levels) in
the longest path for correspondence $w^1$,
\[
R_2^2 =\max \{\tilde {R}_2^2 ,\hat {R}_2^2\}, \quad \tilde {R}_2^2 =
(1/2)\left\| {\bar {t}-t^\ast } \right\|_2^2 , \quad \hat {R}_2^2 =
(1/2)\sum\limits_{e\in E} {\left( {\tau _e \left( {\bar {f}_e^N }
\right)-t_e^\ast } \right)^2} ,
\]
$\bar{f}^N$ is defined in Theorem 2, the method starts from $t^0
= \bar{t}$, $t^\ast$ is a solution of the problem (\ref{Gas_eq2}).

\begin{theorem} 
Let the problem (\ref{Gas_eq2}) be solved
by Algorithm~\ref{alg:main} generating sequences $\{t^i\}, \{\tilde
{t}^i\}$. Then. after $T$ iterations one has
\[
0\le \Bigl\{ {\gamma ^1\psi ^1(\tilde {t}^T/\gamma ^1) +
\sum\limits_{e\in E} {\sigma _e^\ast (\tilde{t}_e^T)} } \Bigr\} +
\Psi (\bar {x}^T,\bar {f}^T) \le \frac{4L_2 R_2^2 }{T^2 },
\]
\textit{where}
\[
f^i=\Theta x^i=-\nabla \psi ^1(t^i/\gamma ^1), \quad x^i=\bigl\{
{x_{p^k}^{k,i} } \bigr\}_{p^k\in P_{w^k}^k ,w^k\in OD^k}^{k=1,...,m}
,
\]
\[
x_{p^k}^{k,i} =d_{w^k}^k \frac{\exp (-g_{p^k}^k (t^i)/\gamma^k)}
{\sum\limits_{\tilde {p}^k\in P_{w^k}^k } \exp (-g_{\tilde {p}^k}^k
(t^i)/\gamma ^k) }, \quad p^k\in P_{w^k}^k , \quad w^k\in OD^k,
\quad k=1,...,m,
\]
\[
\bar {f}^T=\frac{1}{A_T }\sum\limits_{i=0}^T {\alpha_i f^i} , \quad \bar
{x}^T=\frac{1}{A_T }\sum\limits_{i=0}^T {\alpha_i x^i} .
\]
\end{theorem}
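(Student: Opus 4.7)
The plan is to obtain the claimed two-sided bound by combining the primal-dual inequality~\eqref{Gas_eq3} from the convergence analysis of Algorithm~\ref{alg:main} (applied with $f(x) \equiv \gamma^1 \psi^1(t/\gamma^1)$ and $\Psi(x) \equiv \sum_{e \in E} \sigma_e^*(t_e)$) with the variational principle from Theorem 1, which identifies the primal problem in $(x,f)$ with the Legendre-Fenchel dual of the minimization in $t$. The lower inequality $0 \le \{\gamma^1 \psi^1(\tilde t^T/\gamma^1) + \sum_e \sigma_e^*(\tilde t_e^T)\} + \Psi(\bar x^T, \bar f^T)$ is just weak duality: Theorem 1 gives $-\min_{x,f} \Psi(x,f) = \min_t \{\gamma^1 \psi^1(t/\gamma^1) + \sum_e \sigma_e^*(t_e)\}$, and both $\tilde t^T$ and $(\bar x^T, \bar f^T)$ are feasible primal/dual pairs, so the sum of their values is nonnegative.

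For the upper bound, the main task is to rewrite the inner minimum appearing on the left-hand side of~\eqref{Gas_eq3} as exactly $-\Psi(\bar x^T,\bar f^T)$. Using identity $(1')$, namely $\nabla \psi^1(t^i/\gamma^1) = -f^i = -\Theta x^i$ with $x^{k,i}$ given by the logit formula (this formula is precisely the partial derivative obtained by differentiating the nested log-sum-exp expressions $g^k_{p^k}(\cdot)$), the averaged linear term collapses: $\frac{1}{A_T}\sum_i \alpha_i \langle \nabla \psi^1(t^i/\gamma^1), t\rangle = -\langle \bar f^T, t\rangle$. The inner minimum therefore splits into an edge-wise Fenchel part $\min_t \{-\langle \bar f^T, t\rangle + \sum_e \sigma_e^*(t_e)\} = -\sum_e \sigma_e(\bar f_e^T)$ (by biconjugacy), plus the averaged constant term $\frac{1}{A_T}\sum_i \alpha_i(\gamma^1\psi^1(t^i/\gamma^1) - \langle\nabla\psi^1(t^i/\gamma^1),t^i\rangle)$, which is the negative Fenchel conjugate of $\gamma^1 \psi^1(\cdot/\gamma^1)$ aggregated against $\bar f^T$. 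By convexity of the conjugate and the explicit conjugate of a log-sum-exp (Gibbs entropy), this second piece contributes exactly the hierarchical entropy sum $\gamma^k \sum_{w^k}\sum_{p^k} x^k_{p^k}\ln(x^k_{p^k}/d^k_{w^k})$ evaluated at $\bar x^T$, and the combined sum matches $\Psi(\bar x^T,\bar f^T)$ from Theorem 1. Plugging into~\eqref{Gas_eq3} yields the claimed $4L_2 R_2^2/T^2$ bound.

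The final ingredient is estimating $L_2$, the Lipschitz constant of $\nabla \psi^1$ with respect to $\|\cdot\|_2$. Since $\psi^1$ is a nested sum of log-sum-exp functions, a direct Hessian bound using the identity $\|\nabla^2 \log\!\sum e^{-g_p}\|_2 \le \max_p \|\nabla g_p\|_2^2$ and the recursive structure of $g^k_{p^k}(t)$ yields the stated bound $L_2 \le (\min_k \gamma^k)^{-1} \sum_{w^1} d^1_{w^1} l_{w^1}^2$, where $l_{w^1}$ counts edges across all levels on the longest route. The main obstacle is bookkeeping in the second paragraph: one has to carefully identify the averaged conjugates with the hierarchical entropy terms defining $\Psi^2,\dots,\Psi^m$, using Jensen's inequality in the convex direction to ensure that $\frac{1}{A_T}\sum \alpha_i(\text{conjugate at }f^i)$ dominates the conjugate at $\bar f^T$ and therefore preserves the upper-bound direction throughout the chain; the rest is an application of the already established convergence rate~\eqref{eq:rate}. \qed
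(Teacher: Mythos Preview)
The paper does not include a written proof of this theorem; it is presented as the immediate consequence of the primal--dual estimate~\eqref{Gas_eq3}, which itself is read off from~\eqref{eq:PD_rate}. Your sketch follows precisely this intended route: weak duality from Theorem~1 for the left inequality, and identification of the inner minimum in~\eqref{Gas_eq3} with a quantity bounded above by $-\Psi(\bar x^T,\bar f^T)$ for the right inequality. That is the same approach the paper has in mind.

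One point deserves tightening. In the last paragraph you invoke Jensen for the Fenchel conjugate $h^*$ of $h(t)=\gamma^1\psi^1(t/\gamma^1)$, obtaining $\tfrac{1}{A_T}\sum_i\alpha_i\,h^*(-f^i)\ge h^*(-\bar f^T)$. This is true, but $h^*(-\bar f^T)=\min_{x\in X:\,\Theta x=\bar f^T} H(x)\le H(\bar x^T)$, so it does not by itself give the inequality you need for the \emph{specific} primal iterate $\bar x^T$ in the statement. The clean fix is to use that, for each $i$, $h^*(-f^i)=H(x^i)$ (because $x^i$ is the maximizer in the variational representation of $\psi^1$ at $t^i$), and then apply Jensen directly to the hierarchical entropy
\[
H(x)=\sum_{k}\gamma^k\sum_{w^k}\sum_{p^k} x^k_{p^k}\ln\!\bigl(x^k_{p^k}/d^k_{w^k}\bigr),
\]
which is jointly convex in $x$: each summand $x\ln(x/d)$ is jointly convex in $(x,d)$ (perspective of $u\mapsto u\ln u$), and $d^k_{w^k}$ is linear in $x^{k-1}$. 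This yields $\tfrac{1}{A_T}\sum_i\alpha_i H(x^i)\ge H(\bar x^T)$ and hence $-[\text{inner min in~\eqref{Gas_eq3}}]\ge \Psi(\bar x^T,\bar f^T)$, exactly what is required for the upper bound. With this adjustment your argument is complete and matches the paper's implicit proof.
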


Theorem 2 provides the bound for the number of 
iterations in order to solve the problem (\ref{Gas_eq2}) with given accuracy. 
Nevertheless, on each iteration it is necessary to calculate
$\nabla \psi ^1(t/\gamma^1)$ and also $\psi
^1(t/\gamma^1)$. Similarly to
\cite{GasGasEtc2015,Gas2013,Nes2007} it is possible to show, using
the smoothed version of Bellman--Ford method, that for this
purpose it is enough to perform ${\rm O}({|{O^1}| |E|\mathop {\max
}\limits_{w^1\in OD^1} l_{w^1}})$ arithmetic operations.

In general, it is worth noting that the approach of adding some artificial 
vertices, edges, sources, sinks is very useful in different applications
\cite{Gas2015,BabGasLagMen2015,VasGasEtc2014}. 
%
%

\textbf{Acknowledgements.} The research was supported by RFBR,
research project No. 15-31-70001 mol\_a\_mos and No. 15-31-20571
mol\_a\_ved.

%
%


\begin{thebibliography}{}
%
\bibitem{GasGasMatsUs2015}
Gasnikov, A., Gasnikova, E., Matsievsky, S., Usik, I.: Searching of
equilibriums in hierarchical congestion population games. Trudy
MIPT. 28, 129--142 (2015)
\bibitem{AllOre}
Allen-Zhu, Z., Orecchia, L. Linear coupling: An ultimate unification
of gradient and mirror descent. ArXiV preprint:1407.1537v4 (2014)
\bibitem{Nes2013}
Nesterov, Yu.: Gradient methods for minimizing composite functions.
Math. Prog. 140, 125--161 (2013)
\bibitem{NesNem2013}
Nesterov, Yu., Nemirovski, A.: On first order algorithms for
$l_1$/nuclear norm minimization. Acta Numerica. 22, 509--575 (2013)
\bibitem{Nes2013Universal}
Nesterov, Yu.: Universal gradient methods for convex optimization
problems. CORE Discussion Paper 2013/63 (2013)
\bibitem{Nes2009}
Nesterov, Yu.: Primal-dual subgradient methods for convex problems.
Math. Program. Ser. B. 120, 261--283 (2009)
\bibitem{NemOnnRoth2010}
Nemirovski, A., Onn, S., Rothblum, U.\,G.: Accuracy certificates for
computational problems with convex structure. Mathematics of
Operation Research. 35, 52--78 (2010)
\bibitem{GasDvuEtc2015}
Gasnikov, A., Dvurechensky, P., Kamzolov, D., Nesterov, Yu.,
Spokoiny, V., Stetsyuk, P., Suvorikova, A., Chernov, A.: Searching
for equilibriums in multistage transport models. Trudy MIPT. 28,
143--155 (2015)
\bibitem{GasGasEtc2015}
Gasnikov, A., Gasnikova, E., Dvurechensky, P., Ershov, E.,
Lagunovskaya, A.: Search for the stochastic equilibria in the
transport models of equilibrium flow distribution. Trudy MIPT. 28,
114--128 (2015)
\bibitem{Gas2013}
Ed. Gasnikov, A.: Introduction to mathematical modelling of traffic
flows. MCCME, Moscow (2013)
\bibitem{Nes2007}
Nesterov, Yu.: Characteristic functions of directed graphs and
applications to stochastic equilibrium problems. Optim.
Engineering. 8, 193--214 (2007)
\bibitem{Gas2015}
Gasnikov, A.: About reduction of searching competetive equillibrium
to the minimax problem in application to different network problems.
Math. Mod. 27, 121--136 (2015)
\bibitem{BabGasLagMen2015}
Babicheva, T., Gasnikov, A., Lagunovskaya, A., Mendel, M.: Two-stage
model of equilibrium distributions of traffic flows. Trudy MIPT. 27,
31--41 (2015)
\bibitem{VasGasEtc2014}
Vaschenko, V., Gasnikov, A., Molchanov, E., Pospelova, L., Shananin,
A.: Analysis of tariff policy of a railway cargo transportation.
Preprint of CCAS RAS (2014)

\end{thebibliography}
\end{document}